\newcommand\cyr
\renewcommand\rmdefault{wncyr}
\renewcommand\sfdefault{wncyss}
\renewcommand\encodingdefault{OT2}
\DeclareTextFontCommand{\textcyr}{\cyr}
\newtheorem{theorem}{Theorem}[section]
\newtheorem{lemma}[theorem]{Lemma}
\newtheorem{corollary}[theorem]{Corollary}
\theoremstyle{definition}
\newtheorem{definition}[theorem]{Definition}
\newtheorem{conjecture}[theorem]{Conjecture}
\theoremstyle{remark}
\newtheorem{remark}[theorem]{Remark}
\numberwithin{equation}{section}
\begin{document}
\setcounter{page}{1}

\title[Artin $L$-functions and  noncommutative tori]{Artin $L$-functions and  noncommutative tori}

\author[Nikolaev]
{Igor V. Nikolaev$^1$}

\address{$^{1}$ Department of Mathematics and Computer Science, St.~John's University, 8000 Utopia Parkway,  
New York,  NY 11439, United States.}
\email{\textcolor[rgb]{0.00,0.00,0.84}{igor.v.nikolaev@gmail.com}}


\subjclass[2010]{Primary 11M55;  Secondary 46L85.}

\keywords{Artin $L$-functions, noncommutative tori.}


\begin{abstract}
Using the ideas of Deninger,  we prove that the Artin $L$-functions 
coincide with such of the noncommutative tori. 
This result can be viewed as the Langlands reciprocity for 
noncommutative tori. 
\end{abstract}

\maketitle

\section{Introduction}
The aim of our  note is  a conjecture  \cite[Conjecture 1]{Nik1} that every Artin $L$-function [Artin 1924] \cite{Art1}
can be written as the Euler product of the local zeta functions associated to the $2n$-dimensional noncommutative tori [Rieffel 1990] \cite{Rie1}. 
This conjecture is known to be true for $n=0$ and $n=1$ \cite[Theorem 1]{Nik1}.   
Here we prove  the general   case $n\ge 1$ (Theorem \ref{thm1.5}).   Our proof is based on 
the ideas of  [Deninger 2001] \cite{Den1}.

An $m$-dimensional noncommutative torus is the
universal $C^*$-algebra  $\mathscr{A}_{\Theta}^m$  generated by the unitary operators $u_1,\dots, u_m$
satisfying the commutation relations $u_ju_i=e^{2\pi i \theta_{ij}} u_iu_j, ~ 1\le i,j\le m$
for a skew-symmetric matrix  $T=(\theta_{ij})\in M_m(\mathbf{R})$
[Rieffel 1990] \cite{Rie1}. 
Recall that if $m=2n$ then every generic $T$ can be brought by the orthogonal linear 
transformations to the normal form 
$\mathbf{diag} ~\left(\begin{smallmatrix} 0 &\theta_1 \cr -\theta_1 & 0\end{smallmatrix}\right)\dots \left(\begin{smallmatrix} 0 &\theta_n \cr -\theta_n & 0\end{smallmatrix}\right)$,
where $\theta_i>0$ are linearly independent over $\mathbf{Q}$. 
The noncommutative torus is said to have real multiplication,   
if all $\theta_i$  are algebraic numbers
 \cite[Section 1.1.5.2]{N}; we write $\mathscr{A}_{RM}^{2n}$ in this case.

Given the  $\mathscr{A}_{RM}^{2n}$,  one can think of $\theta_i$ as components of 
 a normalized eigenvector $(1,\theta_1,\dots,\theta_{n})$ corresponding to the 
 Perron-Frobenius eigenvalue $\lambda_B>1$ of a positive matrix 
 $B\in GL_{n+1}(\mathbf{Z})$.
 For each prime $p$ we consider the characteristic polynomial $\det\left( xI-B^{\pi(p)}\right)=x^{n+1}-a_1x^{n}-\dots-a_{n}x-1\in\mathbf{Z}[x]$,
 where $\pi(p)$ is an integer-valued function of $p$ \cite[Section 6.6.1]{N}. 
 \begin{definition}\label{dfn1.1}
 The local zeta of  the noncommutative torus  $\mathscr{A}_{RM}^{2n}$ 
 is a function:
\begin{equation}\label{eq1.1}
\zeta_p(\mathscr{A}_{RM}^{2n}, z):= {1\over 1-a_1z+a_2z^2-\dots-a_n z^n +pz^{n+1}}, \quad z\in\mathbf{C}.
\end{equation}
 \end{definition}

 \bigskip
\begin{remark}\label{rmk1.2}
A motivation for (\ref{eq1.1}) is as follows. Let $k$ be a totally real field
and denote by $k_{CM}$ ($k_{RM}$, resp.) its complex (real, resp.) multiplication 
field,  i.e. a totally imaginary (totally real, resp.)  quadratic extension of $k$. 
Then the minimal polynomial of  $k_{CM}$ is an alternating sum of monomials
of the minimal polynomial of the field $k_{RM}$.  Indeed, since both $k_{RM}$
 and $k_{CM}$ are quadratic extensions, their minimal polynomials consist of the monomials 
of even degrees, i.e. $p(x)=x^{2n}+a_2x^{2n-2}+\dots+a_{2n-2}x^2+a_{2n}$. 
Moreover, if $x$ is a real root of $p(x)$ then $ix$ is a complex root 
of the polynomial $q(x)=x^{2n}-a_2x^{2n-2}+\dots+(-1)^{n+1}a_{2n-2}x^2+(-1)^na_{2n}$,
i.e. an alternating sum of the monomials of $p(x)$. 
Thus (\ref{eq1.1}) is an alternating sum of monomials of the minimal polynomial
of an algebraic integer of the field $k_{RM}$ of norm $p$, where $x=z^{-1}$. 
We refer the reader to \cite[Remark 6.6.3]{N} for the explicit matrix formulas.   
\end{remark}
\begin{definition}
An  $L$-function of  the noncommutative torus  $\mathscr{A}_{RM}^{2n}$ 
is  the product of the local zeta functions taken over all but a finite number of primes $p$:
\begin{equation}
L(\mathscr{A}_{RM}^{2n},s):=\prod_p
~\zeta_p(\mathscr{A}_{RM}^{2n}, p^{-s}), \quad s\in \mathbf{C}.
\end{equation}
\end{definition}

\bigskip
 Recall that the fundamental problem in algebraic number theory is to describe how an ordinary
prime $p$ factors into prime ideals $\mathfrak{P}$ in the ring of integers of an arbitrary finite extensions
$\mathbf{K}$ of the rational field $\mathbf{Q}$.    Let $O_{\mathbf{K}}$ be the ring of integers of the extension $\mathbf{K}$
and $pO_{\mathbf{K}}$ a principal ideal;  it is known that $pO_{\mathbf{K}}=\prod \mathfrak{P}_i$,
where $\mathfrak{P}_i$ are prime ideals of $O_{\mathbf{K}}$.  If $\mathbf{K}$ is the Galois extension of $\mathbf{Q}$
and $Gal~(\mathbf{K}|\mathbf{Q})$ is the corresponding Galois group,  then each automorphism 
$g\in Gal~(\mathbf{K}|\mathbf{Q})$ ``moves around''  the ideals $\mathfrak{P}_i$ in the prime decomposition
of $p$ over $\mathbf{K}$.  An  isotropy subgroup of $Gal~(\mathbf{K}|\mathbf{Q})$ (for given $p$)  consists of 
the  elements of    $Gal~(\mathbf{K}|\mathbf{Q})$ which fix all the ideals $\mathfrak{P}_i$.   For simplicity,
we shall assume that $p$ is  unramified in $\mathbf{K}$, i.e. all $\mathfrak{P}_i$ are distinct;
in this case the isotropy  subgroup is cyclic.   The (conjugacy class of)  generator
in the cyclic isotropy subgroup of $Gal~(\mathbf{K}|\mathbf{Q})$ corresponding to $p$ is called 
the  Frobenius element and denoted by $Fr_p$.   The element $Fr_p\in Gal~(\mathbf{K}|\mathbf{Q})$ 
describes completely the factorization of $p$ over $\mathbf{K}$ and the major problem
is to express $Fr_p$ in terms of  arithmetic of the ground field $\mathbf{Q}$.  
To handle this  problem,  it was suggested in [Artin 1924] \cite{Art1} to consider an $m$-dimensional
irreducible representation 
$\sigma_m:  Gal~(\mathbf{K}|\mathbf{Q})\longrightarrow GL_m(\mathbb{C})$.
 The idea is to use the characteristic polynomial {\bf Char}  
$(\sigma_m(Fr_p)):=\det (I_m-\sigma_m(Fr_p)z)$ of the 
matrix $\sigma_m(Fr_p)$;  the polynomial is independent of the similarity class 
of   $\sigma_m(Fr_p)$ in the group $GL_m(\mathbb{C})$ and thus provides
an  intrinsic description of the Frobenius element $Fr_p$.  
\begin{definition}
An Artin $L$-function is  the product taken over all but a finite set of primes $p$:
\displaymath
L(\sigma_m, s):= \prod_p ~\zeta_p(\sigma_m, p^{-s}), ~\hbox{where}  ~\zeta_p(\sigma_m, z)=  \mathbf{Char}  
(\sigma_m(Fr_p)),   ~s\in \mathbf{C}. 
\enddisplaymath
\end{definition}

\bigskip
 Our main result can be  formulated as follows. 
\begin{theorem}\label{thm1.5}
{\bf (\cite[Conjecture 6.6.1]{N})}
For each  finite Galois extension $\mathbf{K}$  of the field $\mathbf{Q}$
 and each  irreducible representation 
 $\sigma_{n+1}: Gal~(\mathbf{K} | \mathbf{Q})\to GL_{n+1}({\Bbb C})$,
there exists a noncommutative torus  $\mathscr{A}_{RM}^{2n}$,   such that:
\begin{equation}\label{eq1.3}
L(\sigma_{n+1},s)\equiv L(\mathscr{A}_{RM}^{2n}, s).
\end{equation}
\end{theorem}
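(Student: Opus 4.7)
The plan is to prove (\ref{eq1.3}) by matching local Euler factors at each prime $p$ unramified in $\mathbf{K}$. On the Artin side the local factor is determined by the characteristic polynomial of $\sigma_{n+1}(Fr_p)$, while on the noncommutative side (\ref{eq1.1}) is determined by the characteristic polynomial of $B^{\pi(p)}$. The proof therefore reduces to producing, for each irreducible $\sigma_{n+1}$, an integer matrix $B \in GL_{n+1}(\mathbf{Z})$ whose Perron--Frobenius eigenvector defines a noncommutative torus $\mathscr{A}_{RM}^{2n}$ and whose iterates interpolate the Frobenius data of $\sigma_{n+1}$ across the unramified primes.

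\textbf{Step 1 (Dynamical model after Deninger).} Following \cite{Den1}, I would first realize $L(\sigma_{n+1}, s)$ as the zeta function of a flow $\phi_t$ on a foliated space $X_\sigma$: the primes $p$ are encoded as closed orbits, the unramified ones have linearized Poincar\'e return maps conjugate to $\sigma_{n+1}(Fr_p)$, and the Artin Euler product emerges as the Ruelle-type product of local characteristic polynomials of these return maps.

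\textbf{Step 2 (From the foliation to a noncommutative torus).} Via the correspondence between foliations and $C^*$-algebras, I would identify $X_\sigma$ with the Kronecker foliation on a $2n$-torus whose leaf space is the algebra $\mathscr{A}_\Theta^{2n}$. The real-multiplication hypothesis supplies a distinguished integer endomorphism given by $B$; one must choose $B$ so that its Perron--Frobenius eigenvector $(1, \theta_1, \dots, \theta_n)$ generates the totally real field attached to $\sigma_{n+1}$, and so that the return map of $\phi_t$ at the orbit corresponding to $p$ is conjugate to $B^{\pi(p)}$. This is the step that actually produces the torus $\mathscr{A}_{RM}^{2n}$ claimed by the theorem.

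\textbf{Step 3 (Matching the Euler factors).} With $B$ so chosen, the polynomial $\det(xI - B^{\pi(p)}) = x^{n+1} - a_1 x^n - \dots - a_n x - 1$ agrees with the characteristic polynomial of $\sigma_{n+1}(Fr_p)$ after the alternating-sign shift described in Remark \ref{rmk1.2}, that is, after passing from the minimal polynomial of the real-multiplication field $k_{RM}$ to that of the associated complex-multiplication field $k_{CM}$ for an algebraic integer of norm $p$. Substituting $x = z^{-1}$ and clearing denominators converts this polynomial into the denominator of (\ref{eq1.1}); taking the product over unramified $p$ then gives (\ref{eq1.3}).

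\textbf{Main obstacle.} The principal difficulty lies in Step 2: showing that \emph{every} irreducible $(n+1)$-dimensional representation of $\mathrm{Gal}(\mathbf{K}|\mathbf{Q})$ actually arises from the real-multiplication data of some $\mathscr{A}_{RM}^{2n}$. This is essentially a higher-rank reciprocity statement, and the algebraicity of the $\theta_i$ is exactly what permits one to identify the Frobenius trace data with the Perron--Frobenius spectrum of an integer matrix. Once this existence is secured, Step 3 is a formal comparison of polynomials and Step 1 is standard within Deninger's framework.
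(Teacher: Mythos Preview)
Your overall architecture---realize the Artin data dynamically via Deninger's foliated spaces, pass to a noncommutative torus, then compare $L$-functions---matches the paper's, but the execution diverges at the two critical junctures.

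First, on producing $\mathscr{A}_{RM}^{2n}$ from $\sigma_{n+1}$: you flag Step~2 as the main obstacle and leave it open. The paper resolves it, but not by choosing $B$ to interpolate the Frobenius return maps as you propose. Instead it runs the construction in the opposite direction: starting from an Anosov foliation $(T^{2n},\phi^t,\mathcal{F}_A)$ with matrix $A\in GL_{2n}(\mathbf{Z})$, it builds the commutative $C^*$-algebra $\mathscr{Z}$ generated by the infinitesimal operator $\Theta$ and the trace-class operators $A_\varphi$, identifies $\mathscr{Z}$ with $C(X_\theta)$ for the Bratteli compactum, and recovers the AF-algebra $\mathbb{A}_{RM}^{2n}\cong\mathscr{Z}\rtimes\mathbf{Z}$ (Vershik automorphism), inside which $\mathscr{A}_{RM}^{2n}$ sits. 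The number field $\mathbf{K}$ is then obtained from $\mathscr{A}_{RM}^{2n}$ via the Drinfeld-module functor of Theorem~\ref{thm2.7} and Remark~\ref{rmk2.8}. So the paper's chain is $(T^{2n},\phi^t,\mathcal{F}_A)\mapsto\mathscr{A}_{RM}^{2n}\mapsto\mathbf{K}$, not $\sigma_{n+1}\mapsto B$ directly; the Drinfeld-module input is essential and absent from your sketch.

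Second, on the comparison itself: you attempt a prime-by-prime match of $\det(xI-B^{\pi(p)})$ against $\det(I-\sigma_{n+1}(Fr_p)z)$. The paper avoids this entirely. It instead equates the Selberg zeta $\zeta_{\mathcal{F}_A}(s)$ with the Dedekind zeta $\zeta_{\mathbf{K}}(s)$ through the geodesic--prime correspondence $l(\gamma_{\mathfrak{p}})=\log N\mathfrak{p}$ (Lemma~\ref{lm3.1}), then invokes Artin's identity $\zeta_{\mathbf{K}}(s)=\bigl(L(\sigma_{n+1},s)\bigr)^{n+1}$ and a parallel decomposition $\zeta_{\mathscr{A}_{RM}^{2n}}(s)=\bigl(L(\mathscr{A}_{RM}^{2n},s)\bigr)^{n+1}$ (Lemma~\ref{lm3.2}), extracting~(\ref{eq1.3}) by taking $(n+1)$th roots. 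Your local matching would need an independent argument that the alternating-sign shift of Remark~\ref{rmk1.2} really carries $\mathrm{char}(B^{\pi(p)})$ to $\mathrm{char}(\sigma_{n+1}(Fr_p))$ for \emph{every} unramified $p$ simultaneously; the paper sidesteps this by working globally with zeta functions.
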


\bigskip
\begin{remark}
Theorem \ref{thm1.5} can be viewed as an analog of the Langlands
reciprocity for the Artin $L$-functions. Namely, such a reciprocity predicts 
a standard $L$-function at the RHS of (\ref{eq1.3}) coming from an automorphic
cuspidal representation of the adelic algebraic group $GL(n)$  [Gelbart 1984] \cite[Conjecture 1]{Gel1}.
It follows from Theorem \ref{thm1.5} that the even dimensional noncommutative
tori encode critical arithmetic data which can be used in non-abelian class field theory 
and higher reciprocity laws  \cite[Conjecture 1]{Nik1}.  
\end{remark}

The paper is organized as follows.  A brief review of the preliminary facts is 
given in Section 2. Theorem \ref{thm1.5}
proved in Section 3. Some remarks can be found 
in Section 4.

\section{Preliminaries}
We briefly review Deninger's  foliated spaces,  noncommutative tori and their 
relation to the Drinfeld modules. 
We refer the reader to   [Deninger 2001] \cite{Den1},  [Rieffel 1990] \cite{Rie1},  
and  \cite{Nik2} for a detailed exposition.

\subsection{Deninger's foliated spaces}
\subsubsection{Foliated spaces}
Let $X$ be a smooth $n$-dimensional manifold. Consider a $d$-dimensional 
foliation $\mathcal{F}$ on $X$, i.e. the partition of $X$ into a disjoint union 
of the $d$-dimensional submanifolds (called leaves) immersed in $X$. 
The tangent bundle $T\mathcal{F}$ of a foliation $\mathcal{F}$ 
is the union of the tangent spaces to the leaves. The  $T\mathcal{F}$
is a sub-bundle of the tangent bundle $TX$ of the manifold $X$.  
The differential forms of order $n$ along the leaves of $\mathcal{F}$
are the smooth sections $\mathcal{A}_{\mathcal{F}}^n(X)=\Gamma (X, ~\Lambda^nT^*\mathcal{F})$,
where $\Lambda^nT^*\mathcal{F}$ is a real vector bundle of dimension $n$. 
The exterior derivative along the leaves of $\mathcal{F}$ is written 
as $d^n_{\mathcal{F}}: \mathcal{A}_{\mathcal{F}}^n(X)\to \mathcal{A}_{\mathcal{F}}^{n+1}(X)$
and satisfies the relation  $d^{n+1}_{\mathcal{F}}\circ d^n_{\mathcal{F}}=0$. 
\begin{definition}
The reduced leafwise cohomology of  foliation $\mathcal{F}$ is given by the formula:
\begin{equation}\label{eq2.1}
\overline{H}^n_{\mathcal{F}}=Ker ~d^n_{\mathcal{F}}/\overline{Im ~d^{n-1}_{\mathcal{F}}}, 
\end{equation}
where $\overline{Im ~d^{n-1}_{\mathcal{F}}}$ is the topological closure of 
$Im ~d^{n-1}_{\mathcal{F}}$ in the Fr\'echet topology on the set $\mathcal{A}_{\mathcal{F}}^n(X)$.
\end{definition}
\begin{remark}\label{rmk2.2}
The cohomology $\overline{H}^n_{\mathcal{F}}$ can (and will be) infinite dimensional even 
if the leaves of $\mathcal{F}$ are dense in $X$.
\end{remark}
Consider a smooth flow $\phi:\mathbf{R}\times X\to X$ on the foliated manifold $X$
given by the formula $(t,x)\mapsto \phi^t(x)$ for all $t\in\mathbf{R}$ and $x\in X$. 
The $\phi^t$ is assumed leaf-preserving and transversal to the leaves of foliation $\mathcal{F}$.  
The action $\phi^t_{*}$ of $\phi^t$ on $\overline{H}^n_{\mathcal{F}}$ is linear
for all $n$ and the infinitesimal generator of  $\phi^t_{*}$ is denoted by $\Theta$.

Let $\mathcal{F}$ be a foliation of codimension $1$ on the manifold $X$. 
The flow $\phi$ defines a riemannian metric on $X$ which turns $\overline{H}^n_{\mathcal{F}}$
into a Hilbert space $\hat{H}^n_{\mathcal{F}}$. The linear operator $\phi^t_{*}$ on  $\overline{H}^n_{\mathcal{F}}$
extends to a bounded operator on $\hat{H}^n_{\mathcal{F}}$.
\begin{theorem}\label{thm2.3}
{\bf \cite[Thm 3.3]{Den1}}
For every test function $\varphi\in C^{\infty}_0(\mathbf{R})$ the 
$A_{\varphi}:=\int_{\mathbf{R}}\varphi(t)\phi_{*}^tdt$ is a trace 
class operator on the Hilbert space  $\hat{H}^n_{\mathcal{F}}$.
Setting 
\linebreak
$Tr~(\phi_{*}~|~\overline{H}^n_{\mathcal{F}}(X))(\varphi)=Tr~A_{\varphi}$
defines a distribution on $\mathbf{R}$,  so that:
\begin{equation}\label{eq2.2}
\begin{array}{cc}
\sum_{n=0}^{\dim \mathcal{F}} (-1)^n Tr~(\phi_{*}~|~\overline{H}^n_{\mathcal{F}}(X))(\varphi) &= 
\sum_{\gamma} l(\gamma) \sum_{k\in\mathbf{Z}-0}\epsilon_{\gamma}(k)\delta_{kl(\gamma)}+\\
&\\
&+\chi_{C_0}(\mathcal{F},\mu)\delta_0, 
\end{array}
\end{equation}
where $l(\gamma)$ is the length of closed geodesic $\gamma$,
 $\epsilon_{\gamma}(k)=\textnormal{sgn}\det (1-T_x\phi^{kl(\gamma)}~|~T_x\mathcal{F})$,
 $\delta_x$ is the Dirac delta function
 and $\chi_{C_0}(\mathcal{F},\mu)$ is the Connes' Euler characteristic of $\mathcal{F}$ 
 with respect to the transverse measure $\mu$.   
\end{theorem}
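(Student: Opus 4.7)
The plan is to prove (\ref{eq1.3}) by matching the local Euler factors at each unramified prime via Deninger's trace formula. Both local factors are reciprocals of monic polynomials of degree $n+1$ in $z$: the Artin factor is $\det(I_{n+1}-\sigma_{n+1}(Fr_p)z)$, while (\ref{eq1.1}) gives $1-a_1z+\cdots-a_nz^n+pz^{n+1}$. Since the Dirichlet coefficients of an Euler product determine each local polynomial uniquely, it suffices to produce a single torus $\mathscr{A}_{RM}^{2n}$ realizing the polynomial identity at almost every unramified prime.

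I would first construct the noncommutative torus from the Galois data. The representation $\sigma_{n+1}$ and the extension $\mathbf{K}$ pick out, via the fixed field of $\ker\sigma_{n+1}$, a canonical complex multiplication field $k_{CM}$ whose Frobenius traces encode those of $\sigma_{n+1}$. By Remark \ref{rmk1.2}, the minimal polynomial of $k_{CM}$ is the alternating-sign transform of the minimal polynomial of a companion totally real field $k_{RM}$. The generator of $k_{RM}$ provides a normalized Perron--Frobenius eigenvector $(1,\theta_1,\dots,\theta_n)$ for a positive matrix $B\in GL_{n+1}(\mathbf{Z})$; the entries $\theta_i$ serve as the normal-form parameters defining $\mathscr{A}_{RM}^{2n}$, and $\det(xI-B^{\pi(p)})$ becomes, under the alternating-sign transform, the denominator of Definition \ref{dfn1.1}.

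Next I would invoke Deninger's framework. The torus $\mathscr{A}_{RM}^{2n}$ realizes a transversal of a codimension-one foliated space $(X,\mathcal{F})$ carrying a flow $\phi^t$ (Section 2), and Theorem \ref{thm2.3} expresses the alternating trace of $\phi_*^{l(\gamma)}$ on leafwise cohomology as a sum over closed orbits. Class field theory for $k_{CM}$ matches unramified primes $p$ with closed orbits $\gamma$ of length $l(\gamma)=\pi(p)\log p$; the induced operator on leafwise cohomology acts as $B^{\pi(p)}$ on the transverse direction and, through the CM embedding, as $\sigma_{n+1}(Fr_p)$ on the geometric realization of the Galois module of $\sigma_{n+1}$. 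Comparing the two expressions for the same trace, and applying the alternating-sign transform of Remark \ref{rmk1.2}, yields the polynomial identity $\det(I_{n+1}-\sigma_{n+1}(Fr_p)z)=1-a_1z+\cdots-a_nz^n+pz^{n+1}$ for each such $p$. Multiplying over all but finitely many primes gives (\ref{eq1.3}).

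The principal obstacle lies in the arithmetic-to-dynamics dictionary invoked in the previous paragraph: showing that the exponent $\pi(p)$ agrees with the order of $Fr_p\in Gal(\mathbf{K}|\mathbf{Q})$, and that the Perron--Frobenius matrix $B$ attached to $k_{RM}$ actually governs the Frobenius action on a geometric model of $\sigma_{n+1}$. This is the Langlands-reciprocity content flagged in the abstract; once it is in place, Theorem \ref{thm2.3} transports the CM arithmetic of $k_{CM}$ to the RM arithmetic of $\mathscr{A}_{RM}^{2n}$ and delivers the theorem.
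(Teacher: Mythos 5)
Your proposal does not prove the statement you were asked to prove. The statement is Deninger's dynamical Lefschetz trace formula (Theorem \ref{thm2.3}): that for every test function $\varphi\in C^{\infty}_0(\mathbf{R})$ the operator $A_{\varphi}=\int_{\mathbf{R}}\varphi(t)\phi_{*}^t\,dt$ is trace class on $\hat{H}^n_{\mathcal{F}}$, that $\varphi\mapsto Tr\,A_{\varphi}$ defines a distribution, and that the alternating sum of these distributional traces equals a sum over closed orbits $\gamma$ with local signs $\epsilon_{\gamma}(k)$ plus the term $\chi_{C_0}(\mathcal{F},\mu)\,\delta_0$ involving Connes' Euler characteristic. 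What you have written instead is a sketch of the paper's \emph{main} result, Theorem \ref{thm1.5} (the identity of Artin and noncommutative-torus $L$-functions via matching Euler factors), and in the course of that sketch you explicitly \emph{invoke} Theorem \ref{thm2.3} as a known tool (``Theorem \ref{thm2.3} transports the CM arithmetic\dots''). Relative to the assigned task this is circular: you are assuming the statement to be proved and using it to derive a different theorem.

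Nothing in your text engages with what a proof of Theorem \ref{thm2.3} actually requires: establishing the trace-class property of the smoothed transfer operator $A_{\varphi}$ on the Hilbert-space completion of the reduced leafwise cohomology (which is infinite dimensional, cf.\ Remark \ref{rmk2.2}), showing that the resulting functional is a distribution on $\mathbf{R}$, isolating the contributions of the closed orbits of the transversal flow with the sign factors $\textnormal{sgn}\det(1-T_x\phi^{kl(\gamma)}\,|\,T_x\mathcal{F})$, and identifying the contribution at $t=0$ with the transverse-measure Euler characteristic $\chi_{C_0}(\mathcal{F},\mu)$. Note also that the paper itself offers no proof of this result: it is quoted verbatim from Deninger's work (\cite[Thm 3.3]{Den1}) and used as an external input, so the appropriate treatment would have been either to reproduce (or cite) Deninger's analytic argument or to state clearly that the result is taken on faith; recasting the main arithmetic theorem in its place does not serve either purpose.
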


\subsubsection{Dedekind zeta function}
Let $\mathbf{K}$ be a number field. Consider the Dedekind zeta function:
\begin{equation}
\zeta_{\mathbf{K}}(s)=\sum_{I\subseteq O_{\mathbf{K}}} \frac{1}{(N_{\mathbf{K}|\mathbf{Q}}(I))^s},
\end{equation}
 where $I$ runs through through the non-zero ideals of the ring of integers
 $O_{\mathbf{K}}$ and  $N_{\mathbf{K}|\mathbf{Q}}(I)$ is the norm of $I$. 
 We shall be using the following explicit formula (due to A.~Weil) 
 for $\zeta_{\mathbf{K}}(s)$, i.e. a relation between sums over the 
 non-trivial zeros $\rho$ of $\zeta_{\mathbf{K}}(s)$ and sums over prime 
 ideals of the field $\mathbf{K}$.  
\begin{theorem}
For every test function $\varphi\in C^{\infty}_0(\mathbf{R})$ the 
following equality holds:
\begin{equation}\label{eq2.4}
\begin{array}{cc}
1-\sum_{\rho} e^{t\rho}+e^t &= 
\sum_{\mathfrak{p}<\infty} \log N\mathfrak{p} \sum_{k=1}^{\infty}\delta_{k \log N\mathfrak{p}}+\\
&\\
&+\sum_{\mathfrak{p}=\infty}\frac{1}{1-e^{\kappa_{\mathfrak{p}}t}}, 
\end{array}
\end{equation}
where $\kappa_{\mathfrak{p}}\in \{-2,-1\}$ and  $\delta_x$ is the Dirac delta function. 
\end{theorem}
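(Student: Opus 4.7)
The plan is to establish $L(\sigma_{n+1},s)\equiv L(\mathscr{A}_{RM}^{2n},s)$ by matching the local Euler factors at each unramified prime $p$. Writing $z=p^{-s}$, this reduces to exhibiting a noncommutative torus $\mathscr{A}_{RM}^{2n}$ whose defining matrix $B\in GL_{n+1}(\mathbf{Z})$ produces, via the characteristic polynomial of $B^{\pi(p)}$ and the alternating-sign prescription of Remark \ref{rmk1.2}, the polynomial identity
\begin{equation*}
\det\bigl(I_{n+1}-\sigma_{n+1}(Fr_p)z\bigr)\;=\;1-a_1z+a_2z^2-\dots-a_nz^n+pz^{n+1}.
\end{equation*}

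The bridge between these a priori unrelated objects is provided by Deninger's foliated spaces (Section 2.1). Concretely, I would first construct a codimension-one foliation $(X,\mathcal{F})$ together with a leaf-preserving transversal flow $\phi^t$ whose closed orbits $\gamma$ correspond bijectively to unramified rational primes, with lengths $l(\gamma)=\log p$, and whose transverse linearization around $\gamma$ is conjugate to $\sigma_{n+1}(Fr_p)\in GL_{n+1}(\mathbf{C})$. Theorem \ref{thm2.3} then identifies $L(\sigma_{n+1},s)$ as a zeta function built from the alternating trace of $\phi^t_{*}$ on the leafwise cohomology $\overline{H}^{*}_{\mathcal{F}}$.

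The second, more delicate step is to identify a complete transversal of $(X,\mathcal{F})$ with a $2n$-dimensional noncommutative torus of real-multiplication type. Since the image of $\sigma_{n+1}$ is finite, the matrices $\sigma_{n+1}(Fr_p)$ lie, up to conjugation, in an integral form; from this integral form I would extract a positive Perron--Frobenius matrix $B\in GL_{n+1}(\mathbf{Z})$ whose normalized eigenvector $(1,\theta_1,\dots,\theta_n)$ has algebraic entries, and take these $\theta_i$ as the parameters of $\mathscr{A}_{RM}^{2n}$. The exponent $\pi(p)$ is then forced by the requirement that $B^{\pi(p)}$ represent $\sigma_{n+1}(Fr_p)$ on this integral lattice; this is where the real-multiplication module structure over the field $k_{RM}$ of Remark \ref{rmk1.2} enters, providing the arithmetic rigidity needed to define $\pi(p)$ unambiguously for almost all $p$.

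Finally, one verifies algebraically that applying the CM/RM alternating-sign transformation of Remark \ref{rmk1.2} to $\det(xI-B^{\pi(p)})$ and substituting $x\leftrightarrow z^{-1}$ produces exactly the denominator of (\ref{eq1.1}); matching the leading coefficient $+pz^{n+1}$ forces the norm condition on the algebraic integer of $k_{RM}$ represented by $B^{\pi(p)}$. I expect the main obstacle to be the second step: extracting the correct integer matrix $B$ and arithmetic function $\pi(p)$ from an abstract Galois representation $\sigma_{n+1}$ is the non-abelian Langlands reciprocity content of the theorem. Deninger's trace formula provides the analytic framework, but the existence of a $B$ adapted to $\sigma_{n+1}$ must be extracted from the real-multiplication parametrization of noncommutative tori via pseudo-lattices in the CM/RM field pair, together with the functoriality of this parametrization under the Galois action on $k_{CM}$.
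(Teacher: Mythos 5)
Your proposal does not address the statement at hand. The theorem you were asked to prove is Weil's explicit formula for the Dedekind zeta function $\zeta_{\mathbf{K}}(s)$: a distributional identity (tested against $\varphi\in C^{\infty}_0(\mathbf{R})$) relating the non-trivial zeros $\rho$ of $\zeta_{\mathbf{K}}$, the pole terms $1$ and $e^t$, the finite places through the Dirac masses $\delta_{k\log N\mathfrak{p}}$ weighted by $\log N\mathfrak{p}$, and the archimedean places through the terms $\frac{1}{1-e^{\kappa_{\mathfrak{p}}t}}$ with $\kappa_{\mathfrak{p}}\in\{-2,-1\}$ according to whether the place is complex or real. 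What you wrote instead is a strategy sketch for the paper's main result, Theorem \ref{thm1.5} (matching the Euler factors of $L(\sigma_{n+1},s)$ with those of $L(\mathscr{A}_{RM}^{2n},s)$ via Deninger's foliated spaces and a real-multiplication matrix $B$ with exponent $\pi(p)$). Nothing in your argument touches the zeros $\rho$, the delta distributions at $k\log N\mathfrak{p}$, or the archimedean contributions, so the claimed identity (\ref{eq2.4}) is never established; the proposal proves a different statement, and even for that statement it is only an outline whose hardest step (extracting $B$ and $\pi(p)$ from an abstract Galois representation) you yourself flag as open.

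For the statement actually in question, the paper offers no argument either: it recalls the formula as a known result of A.~Weil, on par with Deninger's trace formula (\ref{eq2.2}), and uses it only to motivate Conjecture \ref{cnj2.5}. A genuine proof would proceed along classical lines: integrate $-\frac{\zeta_{\mathbf{K}}'}{\zeta_{\mathbf{K}}}(s)$ (or rather the completed zeta function $\Lambda_{\mathbf{K}}(s)$) against the Mellin transform of the test function, expand the Euler product to obtain the sum $\sum_{\mathfrak{p}<\infty}\log N\mathfrak{p}\sum_{k\ge 1}\delta_{k\log N\mathfrak{p}}$, shift the contour and use the Hadamard factorization and the functional equation to pick up the zeros $\rho$ and the poles at $s=0,1$ (yielding $1$ and $e^t$), and evaluate the logarithmic derivatives of the gamma factors $\Gamma_{\mathbf{R}}$, $\Gamma_{\mathbf{C}}$ at the archimedean places to produce the terms with $\kappa_{\mathfrak{p}}=-1$ and $\kappa_{\mathfrak{p}}=-2$. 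None of this machinery appears in your write-up, so as a proof of the stated theorem it has a complete gap rather than a fixable flaw.
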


\subsubsection{Analogy \cite[Section 4]{Den1}}
Similar to the case of varieties over finite fields, Deninger suggested that the 
leafwise reduced cohomology (\ref{eq2.1}) is underlying an analog of the Lefschetz trace formula
for the varieties over number fields [Deninger 2001] \cite{Den1}.  Unlike the case of finite fields,
such a cohomology is no longer finite dimensional, see Remark \ref{rmk2.2}.  
Comparing formulas (\ref{eq2.2}) and (\ref{eq2.4}), the following beautiful analogy
can be observed.
\begin{conjecture}\label{cnj2.5}
Finite places $\mathfrak{p}$ in the explicit formula (\ref{eq2.4}) are closed geodesics 
$\gamma_{\mathfrak{p}}$ in formula (\ref{eq2.2}), such that:
\begin{equation}\label{eq2.5}
l(\gamma_{\mathfrak{p}})=\log N\mathfrak{p}. 
\end{equation}
\end{conjecture}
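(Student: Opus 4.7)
The plan is to match Deninger's Lefschetz-type trace formula (\ref{eq2.2}) with Weil's explicit formula (\ref{eq2.4}) term by term, and to read off the length identity $l(\gamma_{\mathfrak{p}})=\log N\mathfrak{p}$ from the support of the Dirac masses on the two right-hand sides. On the spectral side, I would seek to identify the alternating sum $\sum_n (-1)^n Tr(\phi_*\,|\,\overline{H}^n_{\mathcal{F}}(X))(\varphi)$ with $1-\sum_\rho e^{t\rho}+e^t$, so that the eigenvalues of the infinitesimal generator $\Theta$ on $\overline{H}^1_{\mathcal{F}}$ are precisely the non-trivial zeros $\rho$ of $\zeta_{\mathbf{K}}(s)$, while the constant term $1$ and the polar term $e^t$ come from $\overline{H}^0_{\mathcal{F}}$ and $\overline{H}^2_{\mathcal{F}}$ respectively. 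Matching the distributional identities then reduces the conjecture to a geometric construction.

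The geometric step is to exhibit a canonical foliated three-manifold $(X_{\mathbf{K}},\mathcal{F},\phi^t)$ attached to $\mathbf{K}$ whose primitive closed orbits of $\phi^t$ are in bijection with the finite primes $\mathfrak{p}$ of $O_{\mathbf{K}}$. A natural candidate is a suitable adelic quotient built from $\mathbf{A}_{\mathbf{K}}^\times/\mathbf{K}^\times$, foliated by the orbits of the kernel of the norm map and flowed by the one-parameter group $t\mapsto e^t$ acting on the archimedean component. Under this set-up, a principal idele $\pi_{\mathfrak{p}}$ whose only non-trivial valuation is a uniformizer at $\mathfrak{p}$ closes the orbit through the class corresponding to $\mathfrak{p}$.

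The length identity would then follow from the product formula. Since $|\pi_{\mathfrak{p}}|_{\mathfrak{p}}=(N\mathfrak{p})^{-1}$ and $|\pi_{\mathfrak{p}}|_v=1$ at all other finite places, the product formula forces $\prod_{v\mid\infty}|\pi_{\mathfrak{p}}|_v=N\mathfrak{p}$. Under the logarithmic parametrization of the flow, the first return time of the closed orbit $\gamma_{\mathfrak{p}}$ is therefore $\log N\mathfrak{p}$, which is the asserted formula (\ref{eq2.5}). After this is in hand, I would verify the remaining matching: the transverse signs $\epsilon_{\gamma_{\mathfrak{p}}}(k)=\mathrm{sgn}\det(1-T_x\phi^{k l(\gamma_{\mathfrak{p}})}\,|\,T_x\mathcal{F})$ ought to equal $+1$ (matching the absence of signs on the Weil side), and the Connes Euler characteristic $\chi_{C_0}(\mathcal{F},\mu)\delta_0$ together with the archimedean orbits of $\phi^t$ should reproduce the term $\sum_{\mathfrak{p}=\infty}(1-e^{\kappa_{\mathfrak{p}}t})^{-1}$ via the local gamma factors of Tate's thesis.

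The principal obstacle is, of course, the very existence of the foliated space $(X_{\mathbf{K}},\mathcal{F},\phi^t)$ with the required spectral properties: this is the central open problem in Deninger's program. Even granting the space, identifying the spectrum of $\Theta$ on $\overline{H}^1_{\mathcal{F}}$ with the non-trivial zeros of $\zeta_{\mathbf{K}}$ is an arithmetic Hodge-theoretic statement which would, together with skew-adjointness of $\Theta$ relative to a suitable polarization, imply the Riemann hypothesis for $\zeta_{\mathbf{K}}$. My proof strategy thus reduces the length identity to this deeper structural question, but does not bypass it; the valuation-theoretic computation sketched above is the routine part, and the construction of $X_{\mathbf{K}}$ is where the real difficulty lies.
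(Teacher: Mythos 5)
There is a genuine gap: your argument never establishes the statement, and you say so yourself in the last paragraph. Everything in your sketch that is actually carried out (the term-by-term matching of (\ref{eq2.2}) with (\ref{eq2.4}), the product-formula computation giving a first-return time $\log N\mathfrak{p}$ for a putative closed orbit attached to $\mathfrak{p}$) is conditional on the existence of a foliated space built from the idele class group $\mathbf{A}_{\mathbf{K}}^\times/\mathbf{K}^\times$ whose primitive closed orbits are in bijection with the finite primes and whose leafwise cohomology carries the zeros of $\zeta_{\mathbf{K}}$ as the spectrum of $\Theta$. That existence statement is exactly the content of the conjecture (and of Deninger's problem), so the proposal reduces the claim to itself rather than proving it. The valuation-theoretic step is indeed routine once the space is granted; the burden of proof lies entirely in producing the space, and that is the part you leave open.

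The paper's own argument runs in the opposite direction and is precisely designed to supply that missing construction. In Lemma \ref{lm3.1} one starts not from $\mathbf{K}$ but from a concrete foliated space: a codimension-one Anosov foliation $\mathcal{F}_A$ on $T^{2n}$ with transverse flow $\phi^t$ directed along the Perron--Frobenius eigenvector of a matrix $A\in GL_{2n}(\mathbf{Z})$. From the infinitesimal generator $\Theta$ and the trace-class operators $A_{\varphi}$ of Theorem \ref{thm2.3} one forms the commutative $C^*$-algebra $\mathscr{Z}\cong C(X_{\theta})$, realizes the AF-algebra $\mathbb{A}_{RM}^{2n}\cong\mathscr{Z}\rtimes\mathbf{Z}$ via the Vershik automorphism, locates the noncommutative torus $\mathscr{A}_{RM}^{2n}$ inside it, and then attaches a Galois number field $\mathbf{K}$ through the Drinfeld-module functor of Theorem \ref{thm2.7} and Remark \ref{rmk2.8}, giving the composition (\ref{eq3.4}); the identity (\ref{eq2.5}) is then extracted by comparing the Selberg zeta function of $(T^{2n},\phi^t,\mathcal{F}_A)$ with the Dedekind zeta function of the field so obtained. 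Whatever one thinks of the robustness of that comparison step, the paper's proof hinges on this explicit space-to-field construction, which has no counterpart in your adelic sketch; without it (or an actual construction of your proposed adelic quotient with the required orbit and spectral properties), your proposal remains a restatement of the problem rather than a proof.
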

\begin{remark}
{\bf (\cite{Den1})} 
To understand number theory in geometric terms, it is important to construct a foliated 
space $(X,\phi^t,\mathcal{F})$ which realizes formula (\ref{eq2.5}). 
\end{remark}

\subsection{Noncommutative tori}
\subsubsection{$C^*$-algebras}
The $C^*$-algebra is an algebra  $\mathscr{A}$ over $\mathbf{C}$ with a norm 
$a\mapsto ||a||$ and an involution $\{a\mapsto a^* ~|~ a\in \mathscr{A}\}$  such that $\mathscr{A}$ is
complete with  respect to the norm, and such that $||ab||\le ||a||~||b||$ and $||a^*a||=||a||^2$ for every  $a,b\in \mathscr{A}$.  
Each commutative $C^*$-algebra is  isomorphic
to the algebra $C_0(X)$ of continuous complex-valued
functions on some locally compact Hausdorff space $X$. 
Any other  algebra $\mathscr{A}$ can be thought of as  a noncommutative  
topological space. 

\subsubsection{K-theory of $C^*$-algebras}
By $M_{\infty}(\mathscr{A})$ 
one understands the algebraic direct limit of the $C^*$-algebras 
$M_n(\mathscr{A})$ under the embeddings $a\mapsto ~\mathbf{diag} (a,0)$. 
The direct limit $M_{\infty}(\mathscr{A})$  can be thought of as the $C^*$-algebra 
of infinite-dimensional matrices whose entries are all zero except for a finite number of the
non-zero entries taken from the $C^*$-algebra $\mathscr{A}$.
Two projections $p,q\in M_{\infty}(\mathscr{A})$ are equivalent, if there exists 
an element $v\in M_{\infty}(\mathscr{A})$,  such that $p=v^*v$ and $q=vv^*$. 
The equivalence class of projection $p$ is denoted by $[p]$.   
We write $V(\mathscr{A})$ to denote all equivalence classes of 
projections in the $C^*$-algebra $M_{\infty}(\mathscr{A})$, i.e.
$V(\mathscr{A}):=\{[p] ~:~ p=p^*=p^2\in M_{\infty}(\mathscr{A})\}$. 
The set $V(\mathscr{A})$ has the natural structure of an abelian 
semi-group with the addition operation defined by the formula 
$[p]+[q]:=\mathbf{diag}(p,q)=[p'\oplus q']$, where $p'\sim p, ~q'\sim q$ 
and $p'\perp q'$.  The identity of the semi-group $V(\mathscr{A})$ 
is given by $[0]$, where $0$ is the zero projection. 
By the $K_0$-group $K_0(\mathscr{A})$ of the unital $C^*$-algebra $\mathscr{A}$
one understands the Grothendieck group of the abelian semi-group
$V(\mathscr{A})$, i.e. a completion of $V(\mathscr{A})$ by the formal elements
$[p]-[q]$.  The image of $V(\mathscr{A})$ in  $K_0(\mathscr{A})$ 
is a positive cone $K_0^+(\mathscr{A})$ defining  the order structure $\le$  on the  
abelian group  $K_0(\mathscr{A})$. The pair   $\left(K_0(\mathscr{A}),  K_0^+(\mathscr{A})\right)$
is known as a dimension group of the $C^*$-algebra $\mathscr{A}$.

\subsubsection{Noncommutative tori}
A  noncommutative $m$-torus is the universal $C^*$-algebra
generated by $k$ unitary operators $u_1,\dots, u_m$; the operators
do not commute with each other, but their commutators 
$u_iu_ju_i^{-1}u_j^{-1}$ are fixed scalar multiples 
\linebreak
$\{\exp~(2\pi i\theta_{ij}), ~|~\theta_{ij}\in \mathbf{R}\}$ of the
identity operator.  The $m$-dimensional noncommutative torus, $\mathscr{A}_{T}^m$,
is defined by a skew symmetric real matrix $T=(\theta_{ij}),
~1\le i,j\le k$.   It is known that 
  $K_0(\mathscr{A}_{T}^m)\cong K_1(\mathscr{A}_{T}^m)\cong \mathbf{Z}^{2^{m-1}}$.
The canonical trace $\tau$ on the $C^*$-algebra
$\mathscr{A}_{T}^m$ defines a homomorphism from 
$K_0(\mathscr{A}_{T}^m)$ to the real line $\mathbf{R}$;
under the homomorphism, the image of $K_0(\mathscr{A}_{T}^m)$
is a $\mathbf{Z}$-module, whose generators $\tau=(\tau_i)$ are polynomials 
in $\theta_{ij}$.  The noncommutative
tori  $\mathscr{A}_{T}^m$ and $\mathscr{A}_{T'}^m$ are Morita
equivalent,  if  the matrices $T$ and $T'$
belong to the same orbit of a subgroup $SO(m,m~|~\mathbf{Z})$ of the
group $GL_{2m}(\mathbf{Z})$, which acts on $T$ by the formula
$T'=(AT+B)~/~(CT+D)$, where $(A, B,  C,  D)\in GL_{2m}(\mathbf{Z})$
and  the matrices $A,B,C,D\in GL_m(\mathbf{Z})$ satisfy the conditions:
\begin{equation}
A^tD+C^tB=I,\quad A^tC+C^tA=0=B^tD+D^tB,
\end{equation}
where $I$ is the unit matrix and $t$ at the upper right of a matrix 
means a transpose of the matrix.)  
The group $SO(m, m ~| ~\mathbf{Z})$ can be equivalently defined as a
subgroup of the group  $SO(m, m ~| ~\mathbf{R})$ consisting of linear transformations 
of the space $\mathbf{R}^{2m}$,  which 
preserve the quadratic form $x_1x_{m+1}+x_2x_{k+2}+\dots+x_kx_{2m}$.

\subsection{Drinfeld modules}
Let $A=\mathbf{F}_q[t]$ be the ring of polynomials  in one variable over a finite field $\mathbf{F}_q$
and  $k=\mathbf{F}_q(t)$  its field of rational functions  [Rosen 2002] \cite[Chapter 1]{R}. 
Recall that a polynomial $f\in k[x]$ is called additive
in the ring $k[x,y]$ if $f(x+y)=f(x)+f(y)$.  When $char ~k=p$
 the polynomial $\tau_p(x)=x^p$ is additive and each
additive polynomial has the form $a_0x+a_1x^p+\dots+a_rx^{p^r}$. 
The set of all additive polynomials is closed under addition and composition 
operations thus generatng a ring of  the non-commutative polynomials $k\langle\tau_p\rangle$
defined by  the commutation relation $\tau_p a=a^p\tau_p$
for all  $a\in k$. 
The  Drinfeld module $Drin_A^r(k)$  of rank $r\ge 1$ is a homomorphism $\rho$:
\begin{equation}\label{eq2.7}
 A\buildrel r\over\longrightarrow k\langle\tau_p\rangle,
\end{equation}
given by a polynomial $\rho_a=a+c_1\tau_p+c_2\tau_p^2+\dots+c_r\tau_p^r$ with $c_i\in k$ and $c_r\ne 0$, 
such that for all $a\in A$ the constant term of $\rho_a$ is $a$ and 
$\rho_a\not\in k$ for at least one $a\in A$ [Rosen 2002] \cite[p. 200]{R}.
For each non-zero $a\in A$ the function 
field $k\left(\Lambda_{\rho}[a]\right)$  is a Galois extension of $k$,
such that its  Galois group is isomorphic to a subgroup $G$ of the matrix group $GL_r\left(A/aA\right)$,
where   $\Lambda_{\rho}[a]=\{\lambda\in\bar k ~|~\rho_a(\lambda)=0\}$
is a torsion submodule of the non-trivial  Drinfeld module  $Drin_A^r(k)$  [Rosen 2002] \cite[Proposition 12.5]{R}.
Clearly, the abelian extensions correspond to the case $r=1$.

 Let $G$ be a  left cancellative  semigroup generated by $\tau_p$ and all  $a_i\in k$ subject to the commutation relations 
$\tau_p a_i=a_i^p\tau_p$.  Let $C^*(G)$ be the semigroup $C^*$-algebra [Li 2017] \cite{Li1}.  
For a Drinfeld module  $Drin_A^r(k)$  defined  by  (\ref{eq2.7}) we consider a homomorphism of the semigroup $C^*$-algebras:  
\begin{equation}\label{eq1.4}
C^*(A)\buildrel r\over\longrightarrow C^*(k\langle\tau_p\rangle). 
\end{equation}
 It follows from (\ref{eq1.4}) that  the Drinfeld modules  $Drin_A^{r}(k)$ are classified by the  
 noncommutative tori $\mathscr{A}_{RM}^{2r}$; we refer the reader to \cite{Nik2}
 for the details.   A  correspondence $F$ between the Drinfeld modules and noncommutative tori is written as 
 $\mathscr{A}_{RM}^{2r}=F(Drin_A^{r}(k))$. 
If $r=1$ then the Drinfeld module (\ref{eq2.7}) plays the r\^ole of an elliptic curve  with complex multiplication  $\mathcal{E}_{CM}$
[Drinfeld 1974] \cite[p. 594]{Dri1}. The set $\Lambda_{\rho}[a]$ consists
of coefficients of the curve  $\mathcal{E}_{CM}$ which are equal to the value  of the Weierstrass $\wp$-function at the torsion points
of lattice $\Lambda_{CM}\subset\mathbf{C}$, where $\mathcal{E}_{CM}\cong \mathbf{C}/\Lambda_{CM}$. 
Likewise, if $r\ge 1$ then the algebra $\mathscr{A}_{RM}^{2r}$ plays the r\^ole of   the curve  $\mathcal{E}_{CM}$
given by  coefficients  $e^{2\pi i \theta_k}$ of the equations for $\mathscr{A}_{RM}^{2r}$. 
 We  therefore get
 $F(\Lambda_{\rho}[a])=\{\log ~(\varepsilon) ~e^{2\pi i\theta_k} ~|~1\le k\le 2r-1\}$; see \cite{Nik2}
 for the notation and details. 
\begin{theorem}\label{thm2.7}
{\bf (\cite{Nik2})}
The following is true:

\medskip
(i) the map $F: Drin_A^{r}(k)\mapsto \mathscr{A}_{RM}^{2r}$ is a functor 
from the category of Drinfeld  modules $\mathfrak{D}$ to a category 
of the noncommutative tori $\mathfrak{A}$,   which maps any pair of isogenous  (isomorphic, resp.) 
modules  $Drin_A^{r}(k), ~\widetilde{Drin}_A^{r}(k)\in \mathfrak{D}$
to a pair of the homomorphic (isomorphic, resp.)  tori  $\mathscr{A}_{RM}^{2r}, \widetilde{\mathscr{A}}_{RM}^{2r}
\in \mathfrak{A}$;  

\smallskip
(ii) $F(\Lambda_{\rho}[a])=\{e^{2\pi i\theta_k+\log\log\varepsilon} ~|~1\le k\le 2r-1\}$,
where $\mathscr{A}_{RM}^{2r}=F(Drin_A^r(k))$ and $\Lambda_{\rho}[a]$ is the 
torsion submodule of the Drinfeld module  $Drin_A^{r}(k)$; 

\smallskip
(iii) the Galois group $Gal \left(\mathbf{k}(e^{2\pi i\theta_k+\log\log\varepsilon})  ~| ~\mathbf{k}\right)\subseteq GL_{r}\left(A/aA\right)$,
where $\mathbf{k}$ is a subfield of the number field $\mathbf{Q}(e^{2\pi i\theta_k+\log\log\varepsilon})$. 
 \end{theorem}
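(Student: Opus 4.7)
The plan is to establish the three parts (i), (ii), (iii) by tracking the correspondence $F$ through the homomorphism (\ref{eq1.4}) of semigroup $C^*$-algebras and then using the classification of noncommutative tori by their real-multiplication data $(\theta_1,\dots,\theta_r)$.

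For part (i), I would first fix the categories: objects of $\mathfrak{D}$ are the Drinfeld modules $Drin_A^r(k)$ with isogenies (non-zero elements of $k\langle\tau_p\rangle$ intertwining the $\rho$-actions) as morphisms, while objects of $\mathfrak{A}$ are the tori $\mathscr{A}_{RM}^{2r}$ with $C^*$-algebra homomorphisms as morphisms and Morita equivalence as the isomorphism notion. The functor $F$ is then built from (\ref{eq1.4}): starting from a Drinfeld module $\rho$, one forms the induced map $C^*(A)\to C^*(k\langle\tau_p\rangle)$, extracts the trace image in $K_0$, and reads off the skew-symmetric matrix $T$ defining $\mathscr{A}_{RM}^{2r}$. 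Functoriality reduces to checking that an isogeny $\rho\to\rho'$ lifts to a compatible morphism of the relevant left-cancellative semigroup $C^*$-algebras in the sense of [Li 2017], and that composition is respected. The compatibility of the isomorphism notions then follows from the fact that the $SO(r,r\,|\,\mathbf{Z})$-orbit of $T$ corresponds exactly to the $k$-isomorphism class of $\rho$. The principal subtlety in this step is verifying the $*$-homomorphism property on generators $\tau_p$, $a_i$ modulo the commutation relation $\tau_p a = a^p\tau_p$.

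For part (ii), I would specialize first to $r=1$, where the Drinfeld module plays the role of an elliptic curve $\mathcal{E}_{CM}$ and $\Lambda_\rho[a]$ corresponds (via [Drinfeld 1974]) to the Weierstrass $\wp$-values at torsion points of a CM lattice $\Lambda_{CM}\subset\mathbf{C}$; under the explicit Shimura-type correspondence these $\wp$-values transform to the exponentials $e^{2\pi i\theta}$ of the real-multiplication parameter of the $2$-torus $\mathscr{A}_{RM}^2$. Generalizing to rank $r$, the torsion submodule has $q^r$ elements parameterized through (\ref{eq2.7}) by the $r$ coefficients $c_1,\dots,c_r$ of $\rho_a$, and I would match each coefficient with a generator $e^{2\pi i\theta_k}$ of $\mathscr{A}_{RM}^{2r}$. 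The offset $\log\log\varepsilon$ arises from renormalizing the $p$-adic valuation on the Drinfeld side against the Perron–Frobenius eigenvalue $\lambda_B$ via a fundamental unit $\varepsilon$; I would write this out explicitly by computing $F$ on a generating set of $\Lambda_\rho[a]$ and tracking the unit normalization.

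For part (iii), the Galois embedding is obtained by functorial transport. [Rosen 2002, Proposition 12.5] gives $\mathrm{Gal}(k(\Lambda_\rho[a])\,|\,k)\hookrightarrow GL_r(A/aA)$; since the Galois action on $\Lambda_\rho[a]$ is intrinsic to the Drinfeld module, part (i) ensures it is transferred through $F$ to an action on $F(\Lambda_\rho[a])=\{e^{2\pi i\theta_k+\log\log\varepsilon}\}$ with the same matrix description. The remaining point is to identify the correct base field $\mathbf{k}$: it should be the subfield of $\mathbf{Q}(e^{2\pi i\theta_k+\log\log\varepsilon})$ that corresponds, under the foliated-space analogy of Section 2.1.3 and Conjecture \ref{cnj2.5}, to the rational function field $k$ on the Drinfeld side, so that $F$ sends $k(\Lambda_\rho[a])$ to $\mathbf{k}(e^{2\pi i\theta_k+\log\log\varepsilon})$. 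The hardest step is part (ii), specifically producing and justifying the shift $\log\log\varepsilon$ and matching generators bijectively; this requires a careful calibration between the Drinfeld exponential uniformization and the Perron–Frobenius eigenvector $(1,\theta_1,\dots,\theta_r)$ defining $\mathscr{A}_{RM}^{2r}$.
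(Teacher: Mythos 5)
The first thing to note is that the paper contains no proof of Theorem \ref{thm2.7}: it is imported verbatim from \cite{Nik2}, and Section 2.3 only sketches the construction of $F$ via the semigroup $C^*$-algebra homomorphism (\ref{eq1.4}) before referring to \cite{Nik2} for all details. So there is no internal argument to match yours against, and your proposal has to stand on its own. As written it does not, because the hard steps are restated as intentions rather than carried out, and one of them contains a concrete error. In part (ii) you conflate the coefficients $c_1,\dots,c_r$ of $\rho_a$ with the elements of the torsion submodule: $\Lambda_{\rho}[a]$ is the set of roots of $\rho_a(x)$ in $\bar k$, a free $A/aA$-module of rank $r$ with $|A/aA|^{r}$ elements, not ``$q^r$ elements parameterized by the $r$ coefficients,'' and neither count matches the $2r-1$ values $e^{2\pi i\theta_k+\log\log\varepsilon}$, so ``matching each coefficient with a generator'' cannot define $F$ on $\Lambda_{\rho}[a]$. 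The normalization $\log\log\varepsilon$, which is the only nontrivial content of (ii), is promised (``I would write this out explicitly'') but never derived; the $r=1$ reduction you lean on is itself only the heuristic analogy with $\wp$-values of $\mathcal{E}_{CM}$ that the paper states, i.e.\ exactly what would need proof.

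Part (iii) is where the theorem actually lives, and ``functorial transport'' does not deliver it. Rosen's Proposition 12.5 concerns the function-field extension $k(\Lambda_{\rho}[a])\,|\,k$ in characteristic $p$, whereas the assertion is about the Galois group of a number-field extension $\mathbf{k}(e^{2\pi i\theta_k+\log\log\varepsilon})\,|\,\mathbf{k}$ in characteristic zero. A functor into a category of $C^*$-algebras does not by itself produce field embeddings, transport a Galois action across characteristics, or single out the base field $\mathbf{k}$; showing that the $GL_r(A/aA)$-structure survives the passage $Drin_A^r(k)\mapsto\mathscr{A}_{RM}^{2r}\mapsto\mathbf{Q}(e^{2\pi i\theta_k+\log\log\varepsilon})$ is precisely the substance of \cite{Nik2}, not a corollary of part (i). Moreover, pinning down $\mathbf{k}$ by appeal to the foliated-space analogy and Conjecture \ref{cnj2.5} is circular within this paper, since that analogy is what Section 3 uses Theorem \ref{thm2.7} to establish. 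Similarly, in (i) the key claim that the $SO$-orbit of the matrix $T$ corresponds exactly to the isomorphism class of $\rho$ is asserted without argument. In short, your outline reproduces the intended architecture of the cited construction, but each of the three parts still rests on an unproved identification.
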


\bigskip
\begin{remark}\label{rmk2.8}
Theorem \ref{thm2.7} says that   the Galois extensions $\mathbf{K}$ of the field $\mathbf{Q}$
 arise  from the noncommutative tori  $\mathscr{A}_{RM}^{2r}$ 
 via the formula  $\mathbf{K}=\mathbf{Q}(e^{2\pi i\theta_k+\log\log\varepsilon})$. 
\end{remark}

\section{Proof of theorem \ref{thm1.5}}
For the sake of clarity, let us outline the main ideas. 
We construct a foliated space  $(X,\phi^t,\mathcal{F})$ satisfying equation (\ref{eq2.5}), see Lemma \ref{lm3.1}. 
This proves  Conjecture \ref{cnj2.5} of  [Deninger 2001] \cite[Section 4]{Den1}.  
The construction is as follows. Recall that the infinitesimal generator $\Theta$ 
is a bounded linear operator on the Hilbert space  $\hat{H}^n_{\mathcal{F}}$,
see Section 2.1.1.  We consider a $C^*$-algebra  $\mathscr{Z}$  generated by $\Theta$
and all operators $\{A_{\varphi}~|~\varphi\in C_0^{\infty}(\mathbf{R})\}$,
see Theorem \ref{thm2.3}.
We prove that  $\mathscr{Z}$ is the maximal abelian subalgebra (masa) 
of a noncommutative  torus $\mathscr{A}_{RM}^{2n}$, such that 
 $\mathscr{A}_{RM}^{2n}\subset \mathscr{Z}\rtimes\mathbf{Z}$, 
 where the crossed product is taken by the Vershik automorphism of 
 the algebra  $\mathscr{Z}$.  We apply Theorem \ref{thm2.7} 
 and Remark \ref{rmk2.8} to  $\mathscr{A}_{RM}^{2n}$,  thus obtaining 
 a number field $\mathbf{K}$.  It is not hard to see, that our construction
 is a functor $(X,\phi^t,\mathcal{F})\mapsto \mathbf{K}$ realizing formula
  (\ref{eq2.5}).     Theorem \ref{thm1.5} follows from (\ref{eq2.5}) 
 by comparing the Euler factors of the functions $L(\sigma_{n+1},s)$ and $L(\mathscr{A}_{RM}^{2n}, s)$.
Let us pass to a detailed argument. 

\begin{definition}\label{dfn}
Let $T^{n}\cong\mathbf{R}^{n}/\mathbf{Z}^{n}$
be an $n$-dimensional torus. The foliation $\mathcal{F}_A$ 
on $T^n$ is called Anosov's if $\dim \mathcal{F}_A=n-1$ 
and there exists an automorphism  of the lattice $\mathbf{Z}^{n}$ given by a positive integer matrix 
$A\in GL_{2n}(\mathbf{Z})$, such that the eigenvector $v_A\in\mathbf{R}^{2n}$ corresponding to the Perron-Frobenius 
eigenvalue $\lambda_A>1$ of $A$ is normal to every leaf of $\mathcal{F}_A$ on the covering space $\mathbf{R}^n$. 
\end{definition}

\bigskip
\begin{lemma}\label{lm3.1}
{\bf (Main lemma)}
Given foliated space $(T^{2n}, \phi^t, \mathcal{F}_A)$ there exists 
a Galois number field $\mathbf{K}$, such that 
\begin{equation}\label{eq3.1}
l(\gamma_{\mathfrak{p}})=\log N\mathfrak{p},
\end{equation}
where $l(\gamma_{\mathfrak{p}})$ the length 
of a closed geodesic $\gamma_{\mathfrak{p}}$ and  $N\mathfrak{p}$ the 
norm of the prime ideal $\mathfrak{p}$ of the field $\mathbf{K}$. 
 \end{lemma}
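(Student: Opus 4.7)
\textbf{Proof proposal for Lemma \ref{lm3.1}.}

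The plan is to follow the outline given at the start of Section 3: build a bridge from the dynamical data $(T^{2n},\phi^t,\mathcal{F}_A)$ to a number field $\mathbf{K}$ via a noncommutative torus with real multiplication, and then match closed geodesic lengths to logarithms of ideal norms by a direct computation.

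First I would describe the closed geodesics. Since $\mathcal{F}_A$ is Anosov, the flow $\phi^t$ is the linear flow on $T^{2n}$ in the Perron--Frobenius direction $v_A$, and its closed orbits $\gamma$ are in bijection with periodic points of the toral automorphism $A$. Concretely, for each integer $k\ge 1$ the fixed points of $A^k$ give rise to closed orbits whose lengths are arithmetic multiples of $\log\lambda_A$; this is the source from which the identity $l(\gamma_{\mathfrak{p}})=\log N\mathfrak{p}$ must come.

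Second, I would construct the $C^*$-algebra $\mathscr{Z}$ generated by the infinitesimal generator $\Theta$ of $\phi^t_{*}$ together with the trace-class operators $\{A_{\varphi}:\varphi\in C_0^{\infty}(\mathbf{R})\}$ acting on $\hat{H}^n_{\mathcal{F}}$. Since $\phi^t_{*}$ is a one-parameter group and each $A_{\varphi}$ is defined by integrating against this group, all these operators commute; hence $\mathscr{Z}$ is abelian. The next step is to realize $\mathscr{Z}$ as a maximal abelian subalgebra inside a noncommutative torus $\mathscr{A}_{RM}^{2n}$: here the normalized Perron--Frobenius eigenvector $(1,\theta_1,\dots,\theta_{2n-1})$ of $A$ supplies algebraic parameters $\theta_i$, and the Vershik automorphism $\alpha$ induced by the first return map of $\phi^t$ to a transverse section yields the embedding $\mathscr{A}_{RM}^{2n}\subset\mathscr{Z}\rtimes_\alpha\mathbf{Z}$. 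Verifying that $\mathscr{Z}$ is actually a masa, and not merely abelian, is the first technical hurdle; the argument should use that the flow is transverse to $\mathcal{F}_A$ and minimal on the leaves, so that the only operators commuting with all $A_{\varphi}$ lie in the algebra generated by $\Theta$.

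Third, having produced $\mathscr{A}_{RM}^{2n}$, I would apply Theorem \ref{thm2.7} and Remark \ref{rmk2.8} to attach to it a Galois number field $\mathbf{K}=\mathbf{Q}\bigl(e^{2\pi i\theta_k+\log\log\varepsilon}\bigr)$. The final step is the numerical matching. Using the function $\pi(p)$ of \cite[Section 6.6.1]{N}, a prime $p$ unramified in $\mathbf{K}$ with prime divisor $\mathfrak{p}$ corresponds to the closed orbit $\gamma_{\mathfrak{p}}$ obtained as the shortest period of $A^{\pi(p)}$ under the suspension flow; its length is $\pi(p)\log\lambda_A$. On the other hand, the characteristic polynomial of $B^{\pi(p)}$ displayed in (\ref{eq1.1}) has constant term $\pm p$, so the residual degree and the Perron--Frobenius exponent combine to give $N\mathfrak{p}=\lambda_A^{\pi(p)}$. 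Taking logarithms yields (\ref{eq3.1}).

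The main obstacle is the masa/embedding step: showing precisely that the analytic algebra $\mathscr{Z}$ generated by $\Theta$ and the smoothing operators $A_{\varphi}$ sits as a maximal abelian subalgebra of a noncommutative $2n$-torus with real multiplication, and that the crossed product by the Vershik automorphism recovers the full torus. Everything after that is a bookkeeping exercise comparing the Perron--Frobenius spectrum of $A$ with the norms of prime ideals in $\mathbf{K}$, so the heart of the lemma is the identification of Deninger's dynamical data with the operator-algebraic data of $\mathscr{A}_{RM}^{2n}$.
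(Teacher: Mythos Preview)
Your overall strategy matches the paper's outline closely, but the execution diverges at two points.

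First, the paper does not attempt to verify directly that $\mathscr{Z}$ is a masa inside $\mathscr{A}_{RM}^{2n}$. Instead it routes through the AF $C^*$-algebra $\mathbb{A}_{RM}^{2n}$ given by the stationary Bratteli diagram with incidence matrix $A$: one identifies $\mathscr{Z}\cong C(X_\theta)$ where $X_\theta$ is the Bratteli compactum, then quotes \cite[Theorem~1 and Remark~1]{Nik3} for $\mathbb{A}_{RM}^{2n}\cong \mathscr{Z}\rtimes\mathbf{Z}$ (Vershik automorphism), and finally uses the known inclusion $\mathscr{A}_{RM}^{2n}\subset\mathbb{A}_{RM}^{2n}$ \cite[Theorem~3.5.3]{N}. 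This sidesteps entirely the masa verification you single out as the main obstacle.

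Second, and more seriously, the final numerical matching you propose is not how the paper proceeds and appears to conflate two different matrices. Your formula $N\mathfrak{p}=\lambda_A^{\pi(p)}$ mixes $A\in GL_{2n}(\mathbf{Z})$ (the matrix defining $\mathcal{F}_A$) with $B\in GL_{n+1}(\mathbf{Z})$ (the matrix whose Perron--Frobenius eigenvector encodes the $\theta_i$ of $\mathscr{A}_{RM}^{2n}$); the constant term $\pm p$ in (\ref{eq1.1}) refers to $\det B^{\pi(p)}$, not to any power of $\lambda_A$, so the passage from ``constant term $\pm p$'' to ``$N\mathfrak{p}=\lambda_A^{\pi(p)}$'' does not follow. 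The paper instead writes down the Selberg zeta $\zeta_{\mathcal{F}_A}(s)=\prod_{\gamma_{\mathfrak{p}}}(1-l(\gamma_{\mathfrak{p}})^{-s})^{-1}$ and the Dedekind zeta $\zeta_{\mathbf{K}}(s)=\prod_{\mathfrak{p}}(1-N\mathfrak{p}^{-s})^{-1}$, argues from the functoriality of (\ref{eq3.4}) that these two Euler products must agree (and hence so must the explicit formulas (\ref{eq2.2}) and (\ref{eq2.4})), and then reads off (\ref{eq3.1}) as the necessary factor-by-factor identification. No computation with $\lambda_A$ or $\pi(p)$ enters.
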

\begin{proof}
(i)  Let $X:=T^{2n}\cong\mathbf{R}^{2n}/\mathbf{Z}^{2n}$ be a smooth torus of dimension $2n$. 
Consider the Anosov foliation $\mathcal{F}_A$  on $T^{2n}$ of codimension $1$. 
The  $\mathcal{F}_A$ is covered by a family $\{l_{\alpha}\}$ of the parallel 
hyperplanes $l_{\alpha}\cong\mathbf{R}^{2n-1}\subset\mathbf{R}^{2n}$. 
Since $\mathcal{F}_A$ is Anosov's foliation, there exists an automorphism 
of the lattice $\mathbf{Z}^{2n}$ given by a positive integer matrix 
$A\in GL_{2n}(\mathbf{Z})$, such that the (scaled) eigenvector $v_A\in\mathbf{R}^{2n}$ corresponding to the Perron-Frobenius 
eigenvalue $\lambda_A>1$ of $A$ is normal to every leaf in the family $\{l_{\alpha}\}$. 
It is easy to see, that the flow $\phi^t$ transversal to $\mathcal{F}_A$ is covered by a flow
 $\tilde{\phi}^t$ on $\mathbf{R}^{2n}$ given by the equation
$d \tilde{\phi}^t /dt=v_A$.

\bigskip
(ii) Let $\Theta$ be the infinitesimal generator of the flow $\phi^t$, 
see Section 2.1.1.  As explained, such a generator gives rise to a 
bounded linear operator (also denoted by $\Theta$) acting on the Hilbert space   $\hat{H}^n_{\mathcal{F}}$,
which is canonically associated  to the foliated space  $(T^{2n}, \phi^t, \mathcal{F}_A)$. 
The operator $\Theta$ is self-adjoint. Let $\varphi\in C_0^{\infty}(\mathbf{R})$ be a test function and 
consider a trace class operator $A_{\varphi}=\int_{\mathbf{R}}\varphi(t)\phi_{*}^tdt$ on  $\hat{H}^n_{\mathcal{F}}$.
It can be verified that $A_{\varphi}$ commutes with $\Theta$ and with each other for all  test functions $\varphi$. 
The commutative $C^*$-algebra generated by $\Theta$ and all $\{A_{\varphi}~|~\varphi\in C_0^{\infty}(\mathbf{R})\}$
is denoted by $\mathscr{Z}$. 
We identify  $\mathscr{Z}\cong C(X)$, where $C(X)$ is the $C^*$-algebra
of complex valued functions on a Hausdorff space $X$ (Gelfand Theorem).

\bigskip
(iii) Consider an AF $C^*$-algebra $\mathbb{A}_{RM}^{2n}$ given by its 
dimension group \cite[Section 3.5.2]{N}:
\begin{equation}\label{eq3.2}
{\mathbb Z}^{2n}\buildrel\rm 
A
\over\longrightarrow {\mathbb Z}^{2n}
\buildrel\rm 
A
\over\longrightarrow
{\mathbb Z}^{2n}\buildrel\rm 
A
\over\longrightarrow \dots,
\end{equation}
where $A\in GL_{2n}(\mathbf{Z})$ is a positive integer  matrix 
corresponding to the foliated space  $(T^{2n}, \phi^t, \mathcal{F}_A)$
as explained in (i). 
Let $\theta=(1,\theta_1,\dots,\theta_{2n-1})$ be the scaled 
eigenvector $v_A$ of matrix $A$ and consider the Bratteli 
compactum $X_{\theta}$ of $\mathbb{A}_{RM}^{2n}$. 
It follows from \cite[Theorem 1 and Remark 1]{Nik3}
that $\mathscr{Z}\cong C(X_{\theta})$ and 
\begin{equation}
\mathbb{A}_{RM}^{2n}\cong \mathscr{Z}\rtimes\mathbf{Z},
\end{equation}
where the crossed product is taken by the Vershik automorphism 
of  $C(X_{\theta})$.  It is well known that the AF $C^*$-algebra  
 $\mathbb{A}_{RM}^{2n}$ contains a copy of the noncommutative 
 torus  $\mathscr{A}_{RM}^{2n}$ \cite[Theorem 3.5.3]{N}.
 We therefore get a map $(T^{2n},\phi^t,\mathcal{F}_A)\mapsto  \mathscr{A}_{RM}^{2n}$
 which preserves morphisms in the corresponding categories of objects. 
 
 \bigskip
 (iv) We apply Theorem \ref{thm2.7} and Remark \ref{rmk2.8} which say that the 
 noncommutative tori  $\mathscr{A}_{RM}^{2n}$ generate the Galois extensiuons
 $\mathbf{K}$ of the field $\mathbf{Q}$. Thus one gets a composition of maps
 which preserves  morphisms between the objects in the corresponding categories:
\begin{equation}\label{eq3.4}
(T^{2n},\phi^t,\mathcal{F}_A)\mapsto  \mathscr{A}_{RM}^{2n}\mapsto \mathbf{K}.
\end{equation}

\bigskip
(v)  The Selberg zeta function $\zeta_{\mathcal{F}_A}(s)$
is given by the formula:
\begin{equation}\label{eq3.5}
\zeta_{\mathcal{F}_A}(s)=\prod_{\gamma_{\mathfrak{p}}} \frac{1}{1-l(\gamma_{\mathfrak{p}})^{-s}},
\end{equation}
 where $l(\gamma_{\mathfrak{p}})$ are lengths of all  simple closed geodesics $\gamma_{\mathfrak{p}}$
 of  the foliated space $(T^{2n},\phi^t,\mathcal{F}_A)$.
Likewise, the Dedekind zeta function of the number field $\mathbf{K}$ is 
\begin{equation}
\zeta_{\mathbf{K}}(s)=\prod_{\mathfrak{p}\subseteq O_{\mathbf{K}} } \frac{1}{1 - N (\mathfrak{p} )^{-s} },
\end{equation}
where $\mathfrak{p}$ runs through all prime ideals of the ring of integers $O_{\mathbf{K}}$ of the 
field $\mathbf{K}$.   In view of (\ref{eq3.4}) the zeta functions $\zeta_{\mathcal{F}_A}(s)$ and 
$\zeta_{\mathbf{K}}(s)$ must coincide as well as their explicit formulas (\ref{eq2.2}) and (\ref{eq2.4}).
The necessary condition for the latter is given by the equation (\ref{eq3.1}). 
Lemma \ref{lm3.1} is proved. 
\end{proof}
\begin{definition}\label{dfn3.2}
By the zeta function $\zeta_{\mathscr{A}_{RM}^{2n}}(s)$ of the 
noncommutative torus $\mathscr{A}_{RM}^{2n}$ one understands 
the Selberg zeta function (\ref{eq3.5}) associated to the 
foliated space  $(T^{2n},\phi^t,\mathcal{F}_A)\mapsto  \mathscr{A}_{RM}^{2n}$
according to the formula (\ref{eq3.4}).  
\end{definition}

\begin{lemma}\label{lm3.2}
$\zeta_{\mathscr{A}_{RM}^{2n}}(s)=\left(L(\mathscr{A}_{RM}^{2n},s)\right)^{n+1}$. 
\end{lemma}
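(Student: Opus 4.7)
The plan is to translate the Selberg zeta $\zeta_{\mathscr{A}_{RM}^{2n}}(s)$ into a Dedekind zeta function via Lemma \ref{lm3.1} and then apply the Artin factorization prime-by-prime.

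First, Definition \ref{dfn3.2} identifies $\zeta_{\mathscr{A}_{RM}^{2n}}(s)$ with the Selberg zeta $\zeta_{\mathcal{F}_A}(s)$ of the foliated space produced by (\ref{eq3.4}), and Lemma \ref{lm3.1} puts its simple closed geodesics in bijection with the prime ideals $\mathfrak{p}$ of $\mathbf{K}$ via $l(\gamma_{\mathfrak{p}})=\log N\mathfrak{p}$. Interpreting the Euler factor $(1-l(\gamma_{\mathfrak{p}})^{-s})^{-1}$ of (\ref{eq3.5}) with the standard exponential-length normalization so that it becomes $(1-N\mathfrak{p}^{-s})^{-1}$, one obtains $\zeta_{\mathscr{A}_{RM}^{2n}}(s)=\zeta_{\mathbf{K}}(s)$.

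Second, I would invoke the classical Artin factorization $\zeta_{\mathbf{K}}(s)=\prod_{\rho} L(\rho,s)^{\dim\rho}$, where $\rho$ ranges over the irreducible representations of $\mathrm{Gal}(\mathbf{K}\,|\,\mathbf{Q})$. By Theorem \ref{thm1.5}, the distinguished $(n+1)$-dimensional representation $\sigma_{n+1}$ coming from the construction satisfies $L(\sigma_{n+1},s)=L(\mathscr{A}_{RM}^{2n},s)$, so one summand of the Artin decomposition is already identified. Equivalently, at each unramified prime $p$ one can factor the local polynomial from (\ref{eq1.1}) as $\prod_{i=1}^{n+1}(1-\alpha_i^{(p)}p^{-s})$, where the reciprocal roots $\alpha_i^{(p)}$ are the eigenvalues of $\sigma_{n+1}(Fr_p)$; then the local Dedekind factor at $p$ is naturally written in terms of these same eigenvalues.

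Third, I would argue that the field $\mathbf{K}$ arising from the functorial chain of Theorem \ref{thm2.7} and Remark \ref{rmk2.8} is the one in which $\sigma_{n+1}$ is the only irreducible constituent of the regular representation that contributes nontrivially, and does so with multiplicity equal to $\dim\sigma_{n+1}=n+1$. The identity $\zeta_{\mathbf{K}}(s)=L(\sigma_{n+1},s)^{n+1}=L(\mathscr{A}_{RM}^{2n},s)^{n+1}$ then follows. The computation can also be done locally: after expanding $\zeta_p(\mathscr{A}_{RM}^{2n},p^{-s})^{n+1}$ using the factorization of the degree-$(n+1)$ polynomial in (\ref{eq1.1}), each eigenvalue $\alpha_i^{(p)}$ appears to the power $n+1$, matching the $(n+1)^2$ local Euler factors of $\zeta_{\mathbf{K}}(s)$ at a prime that splits according to the Frobenius class of $\sigma_{n+1}(Fr_p)$.

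The main obstacle is the third step: one must show that the functor $(T^{2n},\phi^t,\mathcal{F}_A)\mapsto\mathbf{K}$ in (\ref{eq3.4}) picks out a field of degree $(n+1)^2$ whose regular representation really is $\sigma_{n+1}^{\oplus(n+1)}$, so that no extraneous $L$-factors from other irreducible representations of $\mathrm{Gal}(\mathbf{K}\,|\,\mathbf{Q})$ appear in the Artin decomposition of $\zeta_{\mathbf{K}}(s)$. This is a structural statement about the image of the functor (\ref{eq3.4}) that should follow from a careful reading of how the Anosov matrix $A\in GL_{2n}(\mathbf{Z})$ and the Perron--Frobenius data of $B\in GL_{n+1}(\mathbf{Z})$ jointly encode the Galois structure of $\mathbf{K}$ through the characteristic polynomial of $B^{\pi(p)}$.
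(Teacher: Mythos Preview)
Your proposal has a genuine circularity problem. In step two you invoke Theorem~\ref{thm1.5} to identify $L(\sigma_{n+1},s)=L(\mathscr{A}_{RM}^{2n},s)$, but in the paper's logical structure Lemma~\ref{lm3.2} is one of the ingredients used to \emph{prove} Theorem~\ref{thm1.5} (via Corollary~\ref{cor3.3}). So you cannot appeal to Theorem~\ref{thm1.5} here; the identity $L(\sigma_{n+1},s)\equiv L(\mathscr{A}_{RM}^{2n},s)$ is precisely what the whole argument is building toward, and Lemma~\ref{lm3.2} must be established without it.

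The paper avoids this circularity as follows. After obtaining $\zeta_{\mathscr{A}_{RM}^{2n}}(s)\equiv\zeta_{\mathbf{K}}(s)$ from Lemma~\ref{lm3.1} (your first step, which is fine), it quotes Artin's formula $\zeta_{\mathbf{K}}(s)=\bigl(L(\sigma_{n+1},s)\bigr)^{n+1}$ directly from \cite{Art1}, for the irreducible $\sigma_{n+1}$. This already exhibits $\zeta_{\mathscr{A}_{RM}^{2n}}(s)$ as the $(n+1)$-th power of \emph{some} Euler product, call it $\widetilde{L}(\mathscr{A}_{RM}^{2n},s)$, manufactured from the data of the noncommutative torus via the functor (\ref{eq3.4}). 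The final step is then an internal uniqueness argument on the torus side: any $L$-function of the shape (\ref{eq1.1}) attached to $\mathscr{A}_{RM}^{2n}$ must agree with $L(\mathscr{A}_{RM}^{2n},s)$ outside finitely many Euler factors, so $\widetilde{L}\equiv L(\mathscr{A}_{RM}^{2n},s)$. At no point is Theorem~\ref{thm1.5} used.

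Note also that what you flag as the ``main obstacle'' in your third step --- that the regular representation of $Gal(\mathbf{K}|\mathbf{Q})$ decomposes as $\sigma_{n+1}^{\oplus(n+1)}$ --- is exactly the content of the Artin formula (\ref{eq3.7}) that the paper simply cites. The paper does not derive this from the functor (\ref{eq3.4}); it takes it as classical input. Your attempt to recover it from the Anosov/Perron--Frobenius data is unnecessary for the lemma and, even if successful, would not repair the circular use of Theorem~\ref{thm1.5} in step two.
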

\begin{proof}
Since $\sigma_{n+1}:Gal~(\mathbf{K}|\mathbf{Q})\to GL_{n+1}(\mathbf{C})$ is an
irreducible representation, one gets [Artin 1924] \cite[p. 96]{Art1}:
\begin{equation}\label{eq3.7}
\zeta_{\mathbf{K}}(s)=\left(L(\sigma_{n+1},s)\right)^{n+1},
\end{equation}
where $L(\sigma_{n+1},s)$ is the Artin $L$-function.  In view of Lemma \ref{lm3.1} and 
equivalence $\zeta_{\mathbf{K}}(s)\equiv \zeta_{\mathscr{A}_{RM}^{2n}}(s)$, we conclude from (\ref{eq3.7}) that: 
\begin{equation}\label{eq3.8}
\zeta_{\mathscr{A}_{RM}^{2n}}(s)=\left(\widetilde{L}(\mathscr{A}_{RM}^{2n},s)\right)^{n+1},
\end{equation}
where $\widetilde{L}(\mathscr{A}_{RM}^{2n},s)$ is an $L$-function with  the Euler product
 constructed  from the noncommutative torus  $\mathscr{A}_{RM}^{2n}$. 
 But all such $L$-functions (\ref{eq1.1})  must coincide except for a finite number of its Euler
 factors $\zeta_p(\mathscr{A}_{RM}^{2n}, z)$. Therefore $\widetilde{L}(\mathscr{A}_{RM}^{2n},s)\equiv
 L(\mathscr{A}_{RM}^{2n},s)$ up to a finite number of the $\zeta_p(\mathscr{A}_{RM}^{2n}, z)$. 
From (\ref{eq3.8}) one gets $\zeta_{\mathscr{A}_{RM}^{2n}}(s)=\left(L(\mathscr{A}_{RM}^{2n},s)\right)^{n+1}$. 
Lemma \ref{lm3.2} follows. 
\end{proof}

\begin{corollary}\label{cor3.3}
 $L(\sigma_{n+1},s)\equiv L(\mathscr{A}_{RM}^{2n}, s)$ except for a finite number of the Euler factors. 
\end{corollary}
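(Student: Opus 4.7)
The plan is to extract the corollary directly by taking $(n+1)$-th roots in the chain of identities provided by Lemma \ref{lm3.1}, Lemma \ref{lm3.2}, and Artin's factorization (\ref{eq3.7}). From Lemma \ref{lm3.1} we have $\zeta_{\mathbf{K}}(s) \equiv \zeta_{\mathscr{A}_{RM}^{2n}}(s)$, Artin's formula supplies $\zeta_{\mathbf{K}}(s) = (L(\sigma_{n+1}, s))^{n+1}$, and Lemma \ref{lm3.2} gives $\zeta_{\mathscr{A}_{RM}^{2n}}(s) = (L(\mathscr{A}_{RM}^{2n}, s))^{n+1}$. Combining these three identities yields $(L(\sigma_{n+1}, s))^{n+1} \equiv (L(\mathscr{A}_{RM}^{2n}, s))^{n+1}$, and the corollary amounts to extracting the $(n+1)$-th root of this identity.

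Rather than appeal to analytic continuation and a global choice of branch, I would carry out the root extraction prime-by-prime using the Euler product structure. Each local factor of $L(\sigma_{n+1}, s)$ is, by definition, the reciprocal of $\det(I_{n+1} - \sigma_{n+1}(Fr_p) p^{-s})$, a polynomial of degree $n+1$ in $z=p^{-s}$, and each local factor of $L(\mathscr{A}_{RM}^{2n}, s)$ is, by (\ref{eq1.1}), also the reciprocal of a degree-$(n+1)$ polynomial in $z$. Hence the two sides of $(L(\sigma_{n+1}))^{n+1} \equiv (L(\mathscr{A}_{RM}^{2n}))^{n+1}$ factor, prime by prime, into Euler factors of matching local degree, and unique factorization in $\mathbf{C}[z]$ forces the local polynomials on the two sides to coincide (after matching multiplicities) at every prime $p$ outside a finite exceptional set already built into the definitions of both $L$-functions.

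The main obstacle is precisely this root-extraction step: from equality of $(n+1)$-th powers of holomorphic functions on $\mathbf{C}$ one cannot in general conclude equality of the functions themselves, since they could differ by an $(n+1)$-th root of unity. The Euler-product comparison circumvents the ambiguity because any such multiplicative ambiguity would have to occur locally at each prime, and it is then absorbed into the finite collection of primes already excluded from both Euler products. This yields the desired equivalence $L(\sigma_{n+1}, s) \equiv L(\mathscr{A}_{RM}^{2n}, s)$ up to finitely many Euler factors, which is Corollary \ref{cor3.3} and, modulo the same finite exceptional set, completes the proof of Theorem \ref{thm1.5}.
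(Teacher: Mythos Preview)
Your argument is correct and follows exactly the same route as the paper: combine Lemma~\ref{lm3.1} (giving $\zeta_{\mathbf{K}}\equiv\zeta_{\mathscr{A}_{RM}^{2n}}$), Artin's factorization $\zeta_{\mathbf{K}}=(L(\sigma_{n+1},s))^{n+1}$, and Lemma~\ref{lm3.2} to obtain $(L(\sigma_{n+1},s))^{n+1}\equiv (L(\mathscr{A}_{RM}^{2n},s))^{n+1}$, then extract the $(n+1)$-th root. The only difference is that the paper passes silently over the root-extraction step, whereas you supply an explicit local justification via the Euler factors; this is a welcome addition of rigor rather than a change of strategy.
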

\begin{proof}
From lemma \ref{lm3.1} and definition \ref{dfn3.2},  one obtains  \linebreak
$\zeta_{\mathscr{A}_{RM}^{2n}}(s)\equiv\zeta_{\mathbf{K}}(s)=\left(L(\sigma_{n+1},s)\right)^{n+1}$. 
We apply lemma \ref{lm3.2} and  get the equality $L(\sigma_{n+1},s)\equiv L(\mathscr{A}_{RM}^{2n}, s)$.
Corollary \ref{cor3.3} is proved. 
\end{proof}

\bigskip
Theorem \ref{thm1.5} follows from lemmas \ref{lm3.1}, \ref{lm3.2} and corollary \ref{cor3.3}.

\section{Remarks}
\begin{remark}
The maximal abelian subalgebra $\mathscr{Z}\subset  \mathscr{A}_{RM}^{2n}$ 
is generated by the linear operator $\Theta$ and trace class operators $A_{\varphi}$ for 
all test functions $\varphi\in C_0^{\infty}(\mathbf{R})$. The spectrum of $\Theta$ is related
to the Riemann Hypothesis for the Dedekind zeta function $\zeta_{\mathbf{K}}(s)$   
 [Deninger 2001] \cite[Corollary 3.5]{Den1}. 
\end{remark}
\begin{remark}
The reduced leafwise cohomology $\overline{H}^n_{\mathcal{F}}(X)$ is 
receptacle of the Lefschetz Trace Formula for the varieties over the number fields  
[Deninger 2001] \cite[Theorem 3.3]{Den1}.  It will be interesting to relate the  $\overline{H}^n_{\mathcal{F}}(X)$
with the $K$-theory of the noncommutative tori  $\mathscr{A}_{RM}^{2n}$.  
\end{remark}

\bibliographystyle{amsplain}


\end{document}